\newtheorem{remark}{Remark}
\newtheorem{proposition}{Proposition}
\newtheorem{lemma}{Lemma}
\DeclareMathOperator\erf{erf}
\DeclareMathOperator\erfc{erfc}
\begin{document}

\begin{frontmatter}

\title{A fractional Hawkes process II: \\ Further characterization of the process\tnoteref{mytitlenote}}

\author[mysecondaryaddress,myfifthaddress]{Cassien Habyarimana}
\ead{habyarimanacassien@gmail.com}

\author[myaddress]{Jane A. Aduda}
\ead{jaduda@jkuat.ac.ke}

\author[myfourthaddress,mysapienzaaddress]{Enrico Scalas\corref{mycorrespondingauthor}}
\ead{e.scalas@sussex.ac.uk}

\author[mymainaddress]{Jing Chen}
\ead{chenj60@cardiff.ac.uk}

\author[mythirdaddress]{Alan G. Hawkes}
\ead{a.g.hawkes@swansea.ac.uk}

\author[mytorinoaddress]{Federico Polito}
\ead{federico.polito@unito.it}

\address[mysecondaryaddress]{Department of Mathematics, Pan African University Institute of Sciences, Technology and Innovation, Nairobi, Kenya}
\address[myfifthaddress]{Department of Mathematics, University of Rwanda, Kigali, Rwanda}
\address[myaddress]{Department of Statistics and Actuarial Sciences, Jomo Kenyatta University of Agriculture and Technology, Nairobi, Kenya}
\address[myfourthaddress]{Department of Mathematics, University of Sussex, Falmer, Brighton, UK}
\address[mysapienzaaddress]{Department of Statistical Sciences, Sapienza University of Rome, Rome, Italy}
\address[mymainaddress]{School of Mathematics, Cardiff University, Cardiff, UK}
\address[mythirdaddress]{School of Management, Swansea University, Swansea, UK}
\address[mytorinoaddress]{Department of Mathematics, University of Turin, Turin, Italy}

\begin{abstract}
We characterize a Hawkes point process with kernel proportional to the probability density function of Mittag-Leffler random variables. This kernel decays as a power law with exponent $\beta +1 \in (1,2]$. Several analytical results can be proved, in particular for the expected intensity of the point process and for the expected number of events of the counting process. These analytical results are used to validate algorithms that numerically invert the Laplace transform of the expected intensity as well as Monte Carlo simulations of the process. Finally, Monte Carlo simulations are used to derive the full distribution of the number of events. The algorithms used for this paper are available at {\tt https://github.com/habyarimanacassien/Fractional-Hawkes}.
\end{abstract}

\begin{keyword}
Probability theory, stochastic processes
\end{keyword}

\end{frontmatter}


\section{Introduction}
\label{sec:Introduction}
In reference \cite{chen20}, a Hawkes process of fractional type was introduced. A Hawkes process \cite{hawkes71a,hawkes71b} is a self-exciting point process defined by the following conditional intensity
\begin{equation}
\label{condint}
\Lambda(t|\mathcal{H}_t) = \Lambda_0 + \alpha \int_0^t f(t-u) \, dN(u),
\end{equation}
where $\mathcal{H}_t$ represents the history of the process, $\Lambda_0 > 0$ is a ``naked'' rate (if $\alpha =0$ we have a Poisson process with rate $\Lambda_0$), $\alpha \in (0,1)$ is the branching ratio (see \cite{hawkes74} for the connection with branching processes), and $f(t)$ is the probability density function of a positive random variable.
The idea was to use as kernel the probability density function of Mittag-Leffler random variables defined as follows \cite{mainardigorenflo}
\begin{equation}
\label{mlrv}
f_\beta (t) = \int_0^\infty \theta \mathrm{e}^{-\theta t} K_\beta (\theta) \; d \theta,
\end{equation}
where $\beta \in (0,1]$ and $K_\beta (\theta)$ is given by
\begin{equation}
\label{mlkernel}
K_\beta (\theta) = \frac{1}{\pi} \frac{\theta^{\beta-1} \sin (\beta \pi)}{\theta^{2 \beta} + 2 \theta^\beta \cos (\beta \pi) +1}.
\end{equation}
Equation \eqref{mlrv} defines the probability density function of a Mittag-Leffler random variable interpolating between a stretched exponential for small $t$ and a power law of index $\beta$ for large $t$ \cite{mainardi00, mainardi04}. The behaviour of the Mittag-Leffler distribution is similar to the one of the Pareto distribution \cite{georgiou15}, but it is easy to use it as there is a simple analytic formula for the Laplace transform of $f_\beta (t)$
\cite{mainardigorenflo}. Functions such as \eqref{mlrv} play an important role in fractional calculus \cite{mainardi00} and, for this reason, we decided to call the corresponding Hawkes process a ``fractional'' Hawkes process. 

\noindent \textcolor{black}{Expressing the measure $d N(u)$ in equation \eqref{condint} as $d N(u) = \sum_{T_k \leq t} \delta(u-T_k)$ where $\{T_k\}_{k=1}^\infty$ are the random variables representing the event times/epochs, one gets
$$
\Lambda(t|\mathcal{H}_t) = \Lambda_0 + \alpha \sum_{T_k \leq t} f_\beta (t - T_k).
$$
The probability density function $f_\beta (t)$ diverges in $0^+$:
$$
\lim_{t \to 0^+} f_\beta (t) = \infty,
$$
so, one potentially has infinite activity in all the epochs $T_k$s. However, one can avoid this problem by setting
$$
\Lambda(t|\mathcal{H}_t) = \Lambda_0 + \alpha \sum_{T_k < t} f_\beta (t - T_k), \; \mathrm{for} \; t \neq T_k
$$
and
$$
\Lambda(t|\mathcal{H}_t) = \Lambda(T_k^-|\mathcal{H}_t), \; \mathrm{for} \; t = T_k.
$$
In this way $\Lambda(T_k|\mathcal{H}_t)$ is finite for all the epochs and, for $\varepsilon > 0$ and $\varepsilon \ll 1$, one has that $\Lambda(T_k + \varepsilon|\mathcal{H}_t)$ is finite as well.}

Our simple approach is not the unique way for defining a ``fractional'' Hawkes process. For example, Hainaut \cite{hainaut19} considers a time-changed intensity process $\Lambda(S_t)$ where the conditional intensity of the self-exciting process is  solution of a stochastic differential equation
\begin{equation}
\label{sde}
d \Lambda_t = \kappa (\rho - \Lambda_t) dt + \eta d P_t,
\end{equation}
where $\kappa$, $\rho$ and $\eta$ are suitable parameters and the driving process $P_t$ is a continuous-time random walk defined as
\begin{equation}
\label{ctrw}
P_t = \sum_{i=1}^{N_t} \xi_i
\end{equation}
where $N_t$ is a counting process and $\xi_i$s are independent and identically distributed marks with finite positive
mean and finite variance. The time-change $S_t$ is then the inverse of a $\beta$-stable subordinator. This process has been later applied to CDS valuation \cite{hainaut22}. Our process is related to {\em Epidemic
Type Aftershock Sequence} (ETAS) models introduced by Ogata \cite{ogata88} as the kernel in \eqref{mlrv} has the same asymptotic behaviour as the power-law kernel suggested by Ogata. 

In the previous paper \cite{chen20}, we used the Laplace transform of the probability density function \eqref{mlrv} \cite{mainardi00}
\begin{equation}
\label{ltmlrv}
\tilde{f}_\beta (s) = \int_0^\infty \mathrm{e}^{-st} f_\beta (t) \; dt = \frac{1}{1+s^\beta}
\end{equation}
to derive the Laplace transform of the expected intensity 
\begin{equation}
\label{eint}
\lambda (t) = \mathbb{E} [\Lambda(t|\mathcal{H}_t)].
\end{equation}
Indeed, from the definition of the conditional intensity process in equation \eqref{condint}, one can derive a self-consistent equation for $\lambda(t)$
\begin{equation}
\label{selfconsistent}
\lambda(t) = \Lambda_0 + \int_0^t f_\beta (t -u) \lambda(u) \; du,
\end{equation}
and, taking the Laplace transform of this equation, one eventually gets
\begin{equation}
\label{expectedintensity1}		  
\tilde{\lambda}(s)=\frac{\Lambda_0}{s} \frac{1+s^\beta}{(1-\alpha)+s^\beta}.
\end{equation}
If time is rescaled using a time scale $\gamma > 0$, this equation becomes
\begin{equation}
\label{expectedintensity}		  
\tilde{\lambda}(s)=\frac{\Lambda_0}{s} \frac{\gamma+s^\beta}{(1-\alpha)\gamma+s^\beta}.
\end{equation}
In \cite{chen20}, we noticed that there is an analytical formula for the expected intensity $\lambda (t)$ when $\beta = 1/2$, but no proof was given of this result. This is now discussed in detail in section \ref{sec:EI}. In that same section, we now present \textcolor{black}{(i) the analytical inverse of \eqref{expectedintensity}} as well as (ii) a numerical method to numerically invert the Laplace transform and compute $\lambda(t)$, in the general case $\beta \in (0,1]$. Note that the case $\beta=1$ leads to probability density functions for exponential random variables. Finally, we study the asymptotic behaviour of $\lambda(t)$ as a consequence of Tauberian theorems, a point that was also mentioned in \cite{chen20} without explicit proof. In Section \ref{sec:ENE}, results on the expected number of events up to time $t$ are presented whereas the distribution of the number of events is discussed in Section \ref{sec:dist}. The Monte Carlo thinning algorithm \cite{ogata81} described in \cite{chen20} and generating the intensity process has been translated into R \cite{Rproject} and it is used throughout this paper. The code used in the present paper is freely available from the following repository {\tt https://github.com/habyarimanacassien/Fractional-Hawkes}.

\section{Expected intensity}
\label{sec:EI}

In this section, we present our results on the expected intensity as a function of time. We focus on the analytical inversion of its Laplace transform in the case $\beta =1/2$, \textcolor{black}{and in the general case $\beta \in (0,1)$ as well as} on the numerical inversion, and on the limiting behaviour for large times ($t \to \infty$). 

\subsection{Analytical result for $\beta = 1/2$}

As discussed in the previous section, the Laplace transform of the expected intensity is given by equation \eqref{expectedintensity}.
As mentioned above, in \citep{chen20}, we presented the explicit inverse Laplace transform of this equation in the case $\beta = 1/2$ without proof. We are now presenting a proof in the following proposition.
\begin{proposition}
The inverse Laplace transform of (\ref{expectedintensity})  for $\beta=1/2$ is given by
\begin{equation}
\lambda(t)=\mathcal{L}^{-1}\left\{\tilde{\lambda} (s) \right\}(t)=\frac{\Lambda_0}{1-\alpha}-\frac{\alpha\Lambda_0}{1-\alpha}e^{(1-\alpha)^2\gamma^2t}\erfc{\left[(1-\alpha)\gamma\sqrt{t}\right]},
\label{inv1}
\end{equation}
where $$\erfc(t)=1-\erf(t)=\frac{2}{\sqrt{\pi}}\int^\infty_te^{-\tau^2}\,d\tau$$ is the complementary of the error function 
$$\erf(t)=\frac{2}{\sqrt{\pi}}\int^t_0e^{-\tau^2}\,d\tau.$$
\label{proposition1}
\end{proposition}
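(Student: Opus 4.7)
The plan is to manipulate $\tilde{\lambda}(s)$ algebraically into a sum of two terms whose inverse Laplace transforms are either elementary or a standard tabled identity, and then invert term by term.

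First, I would rewrite the rational factor in $s^{1/2}$ so that the denominator $(1-\alpha)\gamma + s^{1/2}$ appears in a convenient way. The identity
\begin{equation*}
\frac{\gamma + s^{1/2}}{(1-\alpha)\gamma + s^{1/2}} = \frac{1}{1-\alpha}\left[1 - \frac{\alpha\, s^{1/2}}{(1-\alpha)\gamma + s^{1/2}}\right]
\end{equation*}
is immediate from the numerator manipulation $(1-\alpha)(\gamma+s^{1/2}) = (1-\alpha)\gamma + s^{1/2} - \alpha s^{1/2}$. Substituting into \eqref{expectedintensity} and dividing the second summand by $s^{1/2}$ yields
\begin{equation*}
\tilde{\lambda}(s) = \frac{\Lambda_0}{(1-\alpha)\,s} - \frac{\alpha\,\Lambda_0}{1-\alpha}\cdot \frac{1}{s^{1/2}\bigl[(1-\alpha)\gamma + s^{1/2}\bigr]}.
\end{equation*}

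Next, I would invert each piece. The first is trivial: $\mathcal{L}^{-1}\{1/s\}=1$, giving the constant $\Lambda_0/(1-\alpha)$. The second piece is exactly of the form of the standard tabled Laplace transform
\begin{equation*}
\mathcal{L}\bigl\{e^{a^2 t}\erfc(a\sqrt{t})\bigr\}(s) = \frac{1}{\sqrt{s}\,(\sqrt{s}+a)}, \qquad a>0,
\end{equation*}
with $a=(1-\alpha)\gamma>0$ (which is positive because $\alpha\in(0,1)$ and $\gamma>0$). Combining the two inverse transforms yields \eqref{inv1}.

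The main obstacle — if one regards the Laplace pair above as not entirely elementary — is justifying this tabled identity. I would handle it by a short verification: starting from $\erfc(a\sqrt{t}) = \tfrac{2}{\sqrt{\pi}}\int_{a\sqrt{t}}^\infty e^{-\tau^2}\,d\tau$, interchange the order of integration in $\int_0^\infty e^{-st}e^{a^2 t}\erfc(a\sqrt{t})\,dt$, perform the Gaussian integral in $\tau$, and recover $1/(\sqrt{s}(\sqrt{s}+a))$; alternatively one can cite a standard source such as Abramowitz and Stegun. Apart from this, every step is a direct algebraic rearrangement, so no further subtlety arises.
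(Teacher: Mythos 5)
Your proof is correct, but it runs in the opposite direction from the paper's. The paper starts from the claimed expression \eqref{inv1}, computes its Laplace transform step by step (using the shift property $\mathcal{L}\{e^{at}f(t)\}(s)=\tilde f(s-a)$, the scaling property, the integration rule $\mathcal{L}\{\int_0^t f\}=\tilde f(s)/s$, and $\mathcal{L}\{u^{-1/2}\}(s)=\sqrt{\pi/s}$), and verifies that the result collapses to \eqref{expectedintensity}; in effect it re-derives the pair $\mathcal{L}\{e^{a^2t}\erfc(a\sqrt{t})\}(s)=1/(\sqrt{s}(\sqrt{s}+a))$ from elementary properties rather than citing it. You instead perform a genuine forward inversion: the algebraic split
\begin{equation*}
\tilde{\lambda}(s) = \frac{\Lambda_0}{(1-\alpha)s} - \frac{\alpha\Lambda_0}{1-\alpha}\cdot\frac{1}{\sqrt{s}\bigl(\sqrt{s}+(1-\alpha)\gamma\bigr)}
\end{equation*}
is verified correctly, and each summand is inverted by a known pair. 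Your route is shorter and arguably more natural, since it explains where the answer comes from rather than merely confirming it, and it foreshadows the decomposition the paper uses for general $\beta$ in its Proposition~\ref{propositionnew} (where the two summands invert to Prabhakar functions); your Fubini verification of the tabled pair is sound (it reproduces exactly the quantity $\frac{1}{s-a^2}-\frac{a}{(s-a^2)\sqrt{s}}$ that appears mid-way through the paper's computation). What the paper's verification buys in exchange is self-containedness: it never needs to assert an $\erfc$ transform as known, only the most basic operational rules of the Laplace transform. Both arguments ultimately rest on uniqueness of the inverse Laplace transform for continuous functions, which neither you nor the paper states explicitly; this is a shared, minor omission rather than a gap specific to your proposal.
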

\begin{proof}
The strategy is to find the Laplace transform of $\lambda(t)$ in \eqref{inv1} and show that it coincides with $\tilde{\lambda}(s)$ in \eqref{expectedintensity}. We have
\begin{align}
&\mathcal{L}\left\{\frac{\Lambda_0}{1-\alpha}-\frac{\alpha\Lambda_0}{1-\alpha}e^{(1-\alpha)^2\gamma^2t}\erfc{\left[(1-\alpha)\gamma\sqrt{t}\right]}\right\} (s)\nonumber\\
&=\mathcal{L}\left\{\frac{\Lambda_0}{1-\alpha}\right\}(s)-\mathcal{L}\left\{\frac{\alpha\Lambda_0}{1-\alpha}e^{(1-\alpha)^2\gamma^2t}\right\} (s)+ \nonumber \\ 
&\mathcal{L}\left\{\frac{\alpha\Lambda_0}{1-\alpha}e^{(1-\alpha)^2\gamma^2t}\erf{\left[(1-\alpha)\gamma\sqrt{t}\right]}\right\} (s) \nonumber\\
&=\frac{\Lambda_0}{1-\alpha}\cdot\frac{1}{s}-\frac{\alpha\Lambda_0}{1-\alpha}\cdot\frac{1}{s-(1-\alpha)^2\gamma^2}+\frac{\alpha\Lambda_0}{1-\alpha}\mathcal{L}\left\{e^{(1-\alpha)^2\gamma^2t}\erf{\left[(1-\alpha)\gamma\sqrt{t}\right]}\right\} (s).
\label{lap1}
\end{align}
Using the property that $\mathcal{L}\left\{e^{at}f(t)\right\} (s) =\tilde{f}(s-a)$ where
$\tilde{f}(s)=\mathcal{L}\left\{f(t)\right\} (s)$ \citep{oberhettinger2012tables}, the expression in \eqref{lap1} becomes
\begin{multline}
\mathcal{L}\left\{\frac{\Lambda_0}{1-\alpha}-\frac{\alpha\Lambda_0}{1-\alpha}e^{(1-\alpha)^2\gamma^2t}\erfc{\left[(1-\alpha)\gamma\sqrt{t}\right]}\right\} (s)\\ 
=\frac{\Lambda_0}{1-\alpha} \frac{1}{s}-\frac{\alpha\Lambda_0}{1-\alpha} \frac{1}{s-(1-\alpha)^2\gamma^2}+\frac{\alpha\Lambda_0}{1-\alpha}F\left(s-(1-\alpha)^2\gamma^2\right).
\label{lap2}
\end{multline}
$F(s)$ in \eqref{lap2} is given by
\begin{equation}
F(s)=\mathcal{L}\left\{\erf\sqrt{(1-\alpha)^2\gamma^2t}\right\}(s).
\label{lap3}
\end{equation}			
Also, using the property that $\mathcal{L}\left\{f(at)\right\} (s) =\frac{1}{a} \tilde{f} (\frac{s}{a})$ where, again, $\tilde{f}(s)=\mathcal{L}\left\{f(t)\right\} (s)$ \citep{oberhettinger2012tables}, the expression in \eqref{lap3} becomes
\begin{equation}
F(s)=\mathcal{L}\left\{\erf\sqrt{(1-\alpha)^2\gamma^2t}\right\} (s) =\frac{1}{(1-\alpha)^2\gamma^2}\cdot G\left(\frac{s}{(1-\alpha)^2\gamma^2}\right),
\end{equation}
where 
$$G(s)=\mathcal{L}\left\{\erf\sqrt{t}\right\} (s).$$
We are now left with the task of computing  $G(s)=\mathcal{L}\left\{\erf{\sqrt{t}}\right\} (s)$.
With $\erf(\sqrt{t})=\frac{2}{\sqrt{\pi}}\int^{\sqrt{t}}_0e^{-\tau^2}\,d\tau$, let $\tau^2=u$, so that $d\tau=\frac{du}{2\sqrt{u}}$, and $u=0$ for $\tau=0$ and $u=\sqrt{t}$ for $\tau=t$. Thus, we have
\begin{equation*}
\erf(\sqrt{t})=\frac{2}{\sqrt{\pi}}\int^t_0e^{-u}\,\frac{du}{2\sqrt{u}}=\frac{1}{\sqrt{\pi}}\int^t_0e^{-u}u^{-1/2}\,du,
\end{equation*}
which implies that
\begin{equation}
G(s)=\mathcal{L}\left\{\erf(\sqrt{t})\right\}(s)=\frac{1}{\sqrt{\pi}}\mathcal{L}\left\{\int^t_0e^{-u}u^{-1/2}\,du\right\}(s).
\label{lap4}
\end{equation}
Using the property that
$$
\mathcal{L}\left\{\int^t_0f(\tau)\,d\tau\right\} (s) =\frac{1}{s} \tilde{f}(s),
$$
where, once more,
$\tilde{f}(s)=\mathcal{L}\left\{f(t)\right\} (s)$ \citep{oberhettinger2012tables}, the expression in \eqref{lap4} becomes
\begin{equation}
G(s)=\mathcal{L}\left\{\erf(\sqrt{t})\right\}(s)=\frac{1}{\sqrt{\pi}}\frac{\mathcal{L}\left\{e^{-u}u^{-1/2}\right\}(s)}{s}=\frac{1}{\sqrt{\pi}}\frac{H(s+1)}{s},
\label{lap5}
\end{equation}
where
\begin{equation}
H(s)=\mathcal{L}\{u^{-1/2}\} (s) =\frac{\Gamma(-\frac{1}{2}+1)}{s^{-\frac{1}{2}+1}}(s)=\frac{\Gamma(\frac{1}{2})}{s^{\frac{1}{2}}}=\frac{\sqrt{\pi}}{\sqrt{s}}.
\label{h-value}
\end{equation}
Therefore, replacing \eqref{h-value} into \eqref{lap5}, yields
\begin{align}
G(s)&=\frac{1}{\sqrt{\pi}}\frac{H(s+1)}{s}=\frac{1}{\sqrt{\pi}}\frac{\frac{\sqrt{\pi}}{\sqrt{s+1}}}{s}=\frac{1}{s\sqrt{s+1}}.\nonumber\\
&\Longrightarrow	F(s)=\frac{1}{(1-\alpha)^2\gamma^2}\cdot G\left(\frac{s}{(1-\alpha)^2\gamma^2}\right)=\frac{(1-\alpha)\gamma}{s\sqrt{s+(1-\alpha)^2\gamma^2}}\nonumber\\
&\Longrightarrow	F\left(s-(1-\alpha)^2\gamma^2\right)=\frac{(1-\alpha)\gamma}{\left(s-(1-\alpha)^2\gamma^2\right)\sqrt{s-(1-\alpha)^2\gamma^2+(1-\alpha)^2\gamma^2}}\nonumber\\
&\qquad\qquad\qquad\qquad\qquad =\frac{(1-\alpha)\gamma}{\left(s-(1-\alpha)^2\gamma^2\right)\sqrt{s}}.
\label{f-value}
\end{align}
Finally, taking the expression in (\ref{f-value}) into (\ref{lap2}), yields
\begin{align*}
&\mathcal{L}\left\{	\frac{\Lambda_0}{1-\alpha}-\frac{\alpha\Lambda_0}{1-\alpha}e^{(1-\alpha)^2\gamma^2t}\erfc{\left[(1-\alpha)\gamma\sqrt{t}\right]}\right\} (s)\nonumber\\
&=\frac{\Lambda_0}{1-\alpha} \frac{1}{s}-\frac{\alpha\Lambda_0}{1-\alpha} \frac{1}{s-(1-\alpha)^2\gamma^2}+\frac{\alpha\Lambda_0}{1-\alpha}F\left(s-(1-\alpha)^2\gamma^2\right)
\nonumber\\
&=\frac{\Lambda_0}{1-\alpha} \frac{1}{s}-\frac{\alpha\Lambda_0}{1-\alpha} \frac{1}{s-(1-\alpha)^2\gamma^2}+\frac{\alpha\Lambda_0}{1-\alpha}\frac{(1-\alpha)\gamma}{\left(s-(1-\alpha)^2\gamma^2\right)\sqrt{s}}\nonumber\\
&=\frac{\Lambda_0}{1-\alpha}\left[\frac{\left(\sqrt{s}-(1-\alpha)\gamma\right)\left(\sqrt{s}+(1-\alpha)\gamma-\alpha\sqrt{s}\right)}{s\left(\sqrt{s}-(1-\alpha)\gamma\right)\left(\sqrt{s}+(1-\alpha)\gamma\right)}\right]\nonumber\\
&=\frac{\Lambda_0}{s} \frac{\gamma+\sqrt{s}}{(1-\alpha)\gamma+\sqrt{s}}.
\end{align*}
This ends the proof as this expression coincides with the expression in equation \eqref{expectedintensity} for $\beta = 1/2$.
\end{proof}

\subsection{Inversion of the Laplace transform and limiting behaviour}

\textcolor{black}{For $\beta \in (0,1)$ and $\beta \neq 1/2$, the analytical inverse to equation \eqref{expectedintensity} can be found using a result from \citep{prabhakar1971} (formula 2.5). We report it below for readers' convenience. We define the Prabhakar functions a.k.a. three-parameter Mittag-Leffler functions as follows
\begin{equation}
\label{prabhakar}
E_{a,b}^c (z) := \frac{1}{\Gamma(c)} \sum_{k=0}^\infty \frac{\Gamma(c+k) z^k}{k! \Gamma(a k + b)}, \; a,b,c \in \mathbb{C}, \; \mathrm{Re}(a) > 0.
\end{equation}
The Laplace transform we need is
\begin{equation}
\label{prabhakarlt}
\int_0^\infty \mathrm{e}^{-st} t^{b-1} E_{a,b}^c (\nu t^b) \, dt = s^{-b} (1 - \nu s^{-a})^{-c}, \; \mathrm{Re}(a), \mathrm{Re}(b) > 0, |s| > |\nu|^{1/\mathrm{Re}(a)}.
\end{equation}
We can now state the following proposition
\begin{proposition}
The inverse Laplace transform of \eqref{expectedintensity} is
\begin{equation}
\label{analyticalinverse}
\lambda(t) = \frac{\Lambda_0}{1-\alpha} - \frac{\alpha \Lambda_0}{1-\alpha} E_{\beta} ((\alpha -1) \gamma t^\beta),
\end{equation}
where $E_\beta(z) := E_{\beta,1}^1 (z)$ is the one-parameter Mittag-Leffler function.
\label{propositionnew}
\end{proposition}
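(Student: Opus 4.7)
The plan is to mirror the strategy used in Proposition \ref{proposition1}: rather than directly inverting $\tilde\lambda(s)$, I will compute the Laplace transform of the claimed expression \eqref{analyticalinverse} and verify it coincides with \eqref{expectedintensity}. Since Laplace transforms are injective (on well-behaved functions), this establishes the identity. The constant term $\Lambda_0/(1-\alpha)$ trivially transforms to $\Lambda_0/[(1-\alpha)s]$, so the whole proof reduces to transforming the Mittag--Leffler term $E_\beta((\alpha-1)\gamma t^\beta)$ and then performing a short algebraic simplification.

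The first key step is to invoke formula \eqref{prabhakarlt} with the parameter choice $a=\beta$, $b=1$, $c=1$, and $\nu=(\alpha-1)\gamma$, noting that $E_{\beta,1}^{1}(z)=E_\beta(z)$ from the definition \eqref{prabhakar}. This yields
\begin{equation*}
\mathcal{L}\bigl\{E_\beta((\alpha-1)\gamma t^\beta)\bigr\}(s)
= s^{-1}\bigl(1-(\alpha-1)\gamma s^{-\beta}\bigr)^{-1}
= \frac{s^{\beta-1}}{s^\beta+(1-\alpha)\gamma},
\end{equation*}
valid for $s$ large enough so that $|s|>|(\alpha-1)\gamma|^{1/\beta}$; the identity then extends to the full right half-plane by analytic continuation, which is standard for Laplace-transform identities of this type.

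The second step is to assemble the two transforms and reduce to a common denominator:
\begin{equation*}
\mathcal{L}\{\text{RHS of \eqref{analyticalinverse}}\}(s)
= \frac{\Lambda_0}{(1-\alpha)\,s}
- \frac{\alpha\Lambda_0}{1-\alpha}\cdot\frac{s^{\beta-1}}{s^\beta+(1-\alpha)\gamma}
= \frac{\Lambda_0\bigl[s^\beta+(1-\alpha)\gamma-\alpha s^\beta\bigr]}{(1-\alpha)\,s\,(s^\beta+(1-\alpha)\gamma)}.
\end{equation*}
Factoring $(1-\alpha)$ out of the bracket in the numerator cancels the $(1-\alpha)$ in the denominator, leaving $\Lambda_0(s^\beta+\gamma)/[s(s^\beta+(1-\alpha)\gamma)]$, which is exactly \eqref{expectedintensity}. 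This closes the argument.

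There is no genuine obstacle here; the only point requiring care is matching the parameters of the Prabhakar transform correctly (in particular, distinguishing the role of $b$ and $a$ in the prefactor $t^{b-1}$ vs.\ the argument $\nu t^{a}$, which in our case simplifies because $b=1$). For comparison with the $\beta=1/2$ case of Proposition \ref{proposition1}, one may also briefly remark that $E_{1/2}(-z)=\mathrm{e}^{z^2}\erfc(z)$, so that \eqref{analyticalinverse} specializes to \eqref{inv1}; this consistency check is a pleasant sanity test but is not needed for the proof itself.
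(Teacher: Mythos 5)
Your proof is correct, but it runs in the opposite direction to the paper's. The paper derives the result constructively: it splits $\tilde\lambda(s)$ into the two summands $\Lambda_0\gamma s^{-1}/((1-\alpha)\gamma+s^\beta)$ and $\Lambda_0 s^{\beta-1}/((1-\alpha)\gamma+s^\beta)$, inverts each via \eqref{prabhakarlt} (obtaining $\gamma t^\beta E_{\beta,\beta+1}^1((\alpha-1)\gamma t^\beta)$ and $E_{\beta,1}^1((\alpha-1)\gamma t^\beta)$ respectively), and then collapses the two-parameter term using the recurrence $E_{a,b}^1(z)=zE_{a,a+b}^1(z)+1/\Gamma(b)$ from Mathai--Haubold. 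You instead verify the claimed answer by taking its Laplace transform, which requires only a single application of \eqref{prabhakarlt} (with $b=c=1$), no Mittag--Leffler recurrence, and one line of algebra --- at the cost of invoking injectivity of the Laplace transform and an analytic-continuation remark to extend the identity beyond $|s|>|\nu|^{1/\beta}$. Your route is the more economical of the two and is exactly the strategy the paper itself uses for Proposition \ref{proposition1}, so it fits the paper's style; the paper's route has the advantage of actually producing the formula rather than presupposing it. One point in your favour worth keeping: you correctly read the Prabhakar transform as pairing $t^{b-1}$ with the argument $\nu t^{a}$ (so that with $b=1$, $a=\beta$ the argument is $(\alpha-1)\gamma t^{\beta}$, not $(\alpha-1)\gamma t$); the formula as printed in \eqref{prabhakarlt} shows $\nu t^{b}$, which would give the wrong answer here, so your explicit caution about distinguishing $a$ from $b$ is not pedantry but necessary.
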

\begin{proof}
To prove the result, we first rewrite \eqref{expectedintensity} as
$$
\tilde{\lambda}(s) = \Lambda_0 \left[\frac{\gamma s^{-1}}{(1-\alpha) \gamma + s^\beta} + \frac{s^{\beta-1}}{(1-\alpha) \gamma + s^\beta} \right],
$$
then we apply \eqref{prabhakarlt} to the two summands to get
$$
\lambda(t) = \Lambda_0 [ \gamma t^\beta E_{\beta, \beta +1}^1 ((\alpha-1) \gamma t^\beta) + E_{\beta,1}^1 ((\alpha-1) t^\beta)]. 
$$
Now, it is possible to use the following identity for the two-parameter Mittag-Leffler function (see \citep{mathai08}, equation (2.2.1))
$$
E_{a,b}^1 (z) = z E_{a,a+b}^1 (z) + \frac{1}{\Gamma(b)},
$$
with $a = \beta$, $b=1$ and $z=(\alpha -1) \gamma t^\beta$ to get the thesis \eqref{analyticalinverse}.
\end{proof}
\begin{remark}
Proposition \ref{proposition1} can now be seen as a corollary of Proposition \ref{propositionnew} setting $\beta = 1/2$ and noticing that $E_{1/2,1}^1 (z) = \mathrm{e}^{z^2} \erfc (-z)$ (see \citep{mathai08}, equation (2.1.5)).
\end{remark}
To test the analytical results}, we have used an algorithm for the numerical inversion of Laplace transforms \citep{evans2000}. We have modified the original R code in the package {\em invLT} \citep{invLT} for our purposes and the code is available at {\tt https://github.com/habyarimanacassien/Fractional-Hawkes}. In Figure \ref{Fig:1}, we present a comparison between the numerical inversion of the Laplace transform and the analytical result for the cases $\beta = 1/2$ and $\beta = 0.9$; the values of the other parameters are specified in the figure caption.\begin{figure}
\includegraphics[width= \textwidth]{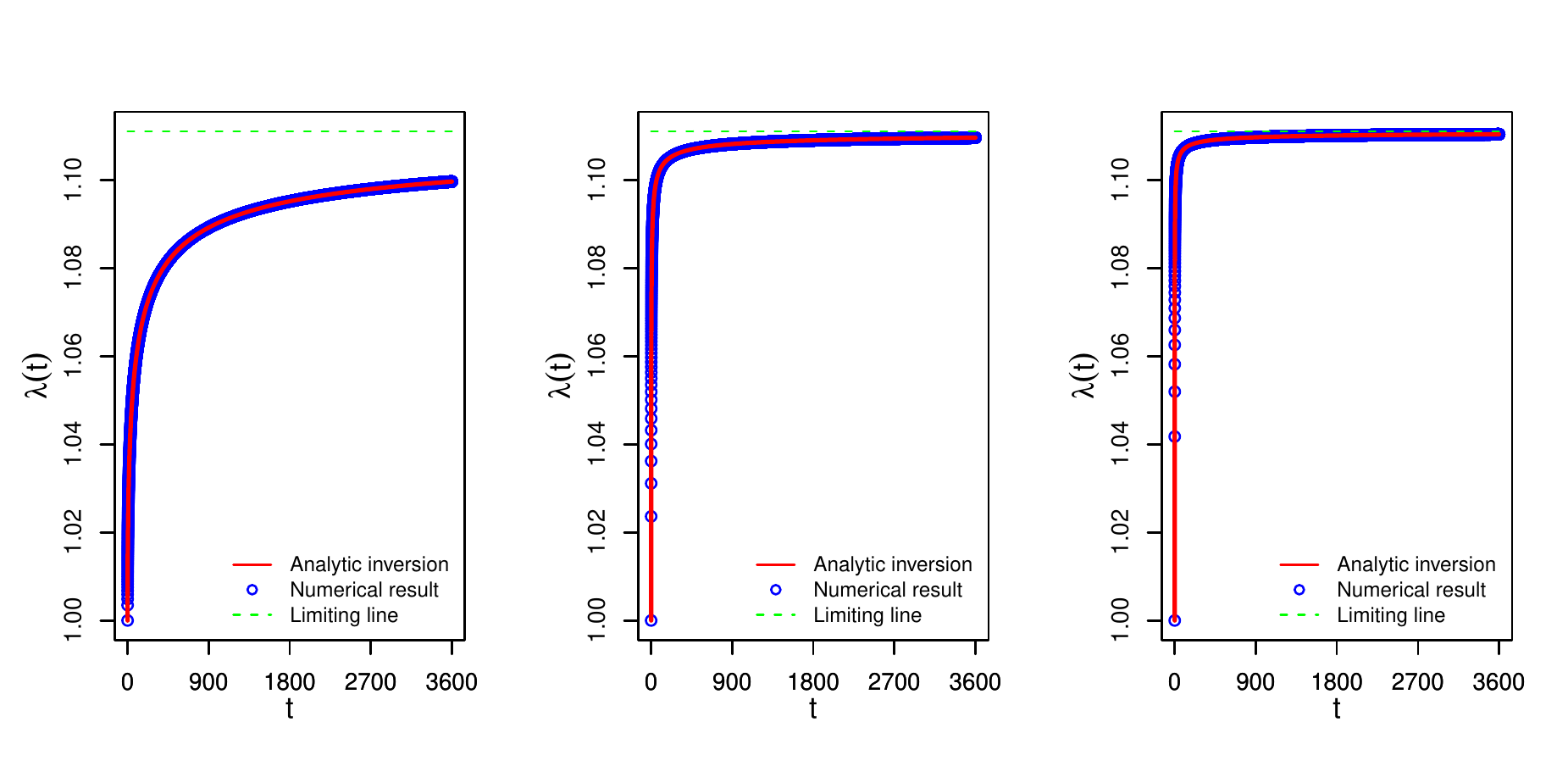}
\includegraphics[width= \textwidth]{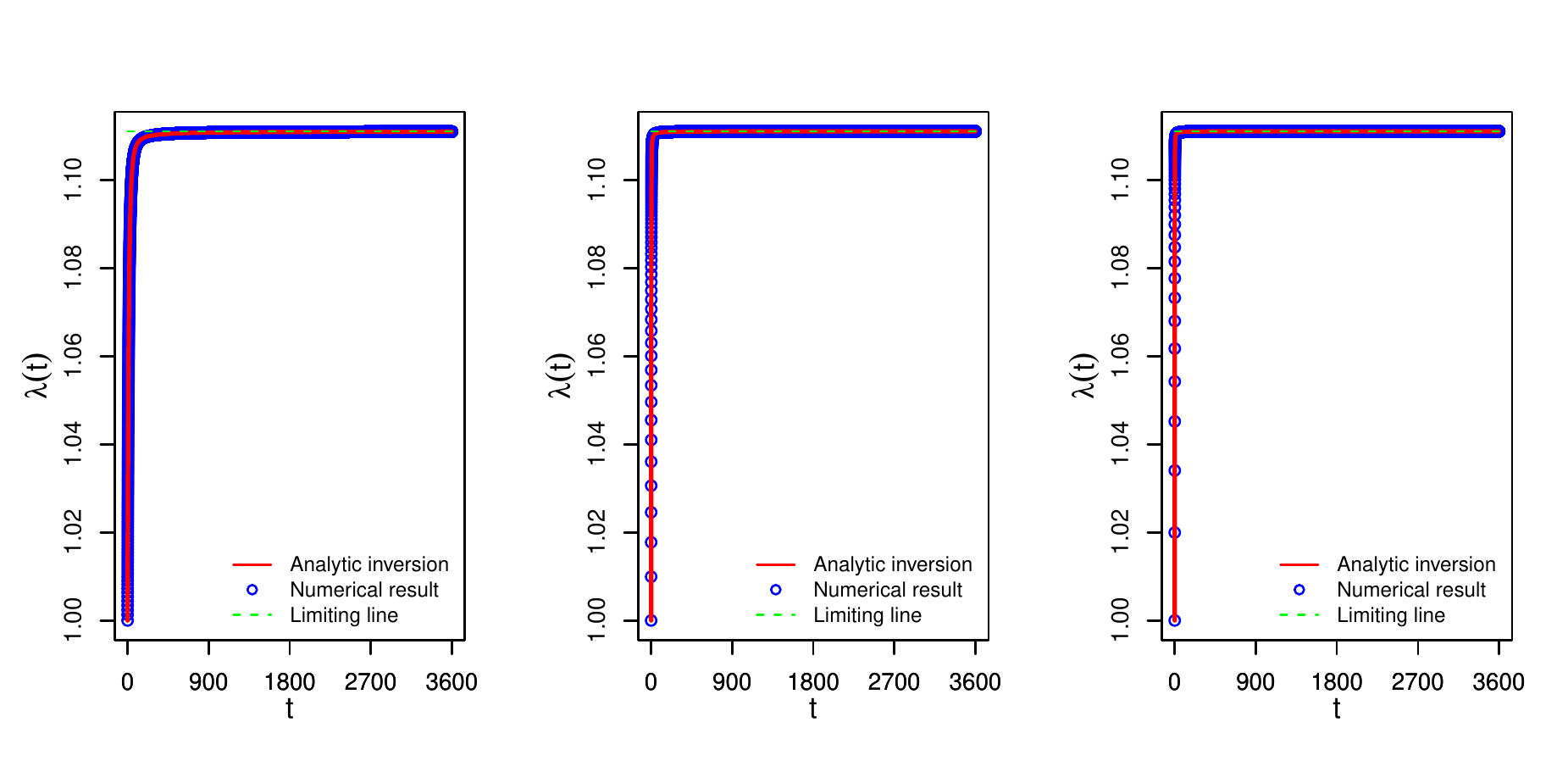}
\caption{(Colour online) Top: Comparison between the exact formula \eqref{inv1} for $\beta = 1/2$ (solid red line) and the numerical inversion of the Laplace transform in \eqref{expectedintensity} (blue dots). The parameters are
$\Lambda_0 = 1$, $\alpha = 0.1$ and $\gamma = 0.1, 0.8, 1.7$ from left to right. Bottom: Comparison between the exact formula \eqref{analyticalinverse} for $\beta = 0.9$ (solid red line) and the numerical inversion of the Laplace transform in \eqref{expectedintensity} (blue dots). The parameters are $\Lambda_0 = 1$, $\alpha = 0.1$ and $\gamma = 0.1, 0.8, 1.7$ from left to right. Time is measured in arbitrary units.}
\label{Fig:1}
\end{figure}

As mentioned in \citep{chen20}, one can observe a fast increase of the expected intensity, followed by a slower convergence to a constant value. In that paper, we mentioned that one can prove that the following limit holds
\begin{equation}
\label{limitingbehaviour}
\lim_{t \to \infty} \lambda(t) = \frac{\Lambda_0}{1 - \alpha}
\end{equation}
using Tauberian theorems. We now make this result explicit. We first quote, without proof and as a lemma, the Tauberian theorem proven by Hardy and Littlewood \citep{hardy1930}. 
\begin{lemma}[Hardy and Littlewood 1930, theorem 1]
\label{tauberian1}
If $f(t)$ is positive and integrable over every finite range $(0,T)$ and $\mathrm{e}^{-st} f(t)$ is integrable over $(0, \infty)$ for every $s >0$ and if
\begin{equation}
\label{asymptotic1}
\tilde{f} (s) = \int_0^\infty \mathrm{e}^{-st} f(t) \, dt  \sim H s^{-a}
\end{equation}
for $s \to 0$ with $a >0$, $H>0$, then as $t \to \infty$
\begin{equation}
\label{asymptotic2}
F(t) = \int_0^t f(u) \, du \sim \frac{H}{\Gamma(1+a)} t^a.
\end{equation}
\end{lemma}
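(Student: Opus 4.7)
The plan is to apply Karamata's classical method. Writing $F(t) = \int_0^t f(u) \, du$, integration by parts gives $\tilde f(s) = s \int_0^\infty \mathrm{e}^{-st} F(t) \, dt$, so the hypothesis $\tilde f(s) \sim H s^{-a}$ becomes a Laplace-transform asymptotic for $F$ itself, namely $\int_0^\infty \mathrm{e}^{-st} F(t) \, dt \sim H s^{-a-1}$ as $s \to 0^+$. The key additional structural fact is that $F$ is non-decreasing because $f \ge 0$; this monotonicity is the Tauberian side condition that will ultimately let us convert an integrated asymptotic into a pointwise one.

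First I would test the hypothesis against the exponential test functions $\varphi_k(x) = \mathrm{e}^{-kx}$, $k>0$: the assumption directly yields $s^a \int_0^\infty \varphi_k(st) f(t) \, dt = s^a \tilde f(ks) \to H k^{-a}$ as $s \to 0^+$, and since $\int_0^\infty \varphi_k(x) \frac{x^{a-1}}{\Gamma(a)} \, dx = k^{-a}$, this can be rewritten as $s^a \int_0^\infty \varphi_k(st) f(t) \, dt \to H \int_0^\infty \varphi_k(x) \frac{x^{a-1}}{\Gamma(a)} \, dx$. By linearity this extends to every finite combination of the $\varphi_k$, and by Stone--Weierstrass the algebra they generate is dense in the continuous functions on $[0,\infty)$ vanishing at infinity. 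Hence for every such continuous $\varphi$ one has $s^a \int_0^\infty \varphi(st) f(t) \, dt \to H \int_0^\infty \varphi(x) \frac{x^{a-1}}{\Gamma(a)} \, dx$.

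The main obstacle is promoting this convergence from continuous test functions to the discontinuous indicator $\mathbf{1}_{[0,1]}$, because plugging $\varphi = \mathbf{1}_{[0,1]}$ into the above would give exactly $s^a F(1/s) \to H \int_0^1 \frac{x^{a-1}}{\Gamma(a)}\,dx = H/\Gamma(1+a)$. I would handle this by a monotonicity squeeze: for each $\varepsilon > 0$, sandwich $\mathbf{1}_{[0,1]}$ between continuous functions $\varphi_- \le \mathbf{1}_{[0,1]} \le \varphi_+$ that agree with it outside an $\varepsilon$-neighbourhood of $x=1$, apply the continuous convergence to both $\varphi_\pm$, and use the monotonicity of $F$ (via $F((1-\varepsilon)/s)\le \int_0^\infty \varphi_+(st) f(t) \, dt$ and an analogous lower bound) to trap $s^a F(1/s)$. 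Sending first $s \to 0^+$ and then $\varepsilon \to 0$, the continuity of $c \mapsto \int_0^c x^{a-1}/\Gamma(a)\,dx$ pinches $s^a F(1/s)$ to $H/\Gamma(1+a)$.

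Finally, the substitution $t = 1/s$ turns this into the claimed asymptotic $F(t) \sim \frac{H}{\Gamma(1+a)}\, t^a$ as $t \to \infty$, completing the proof.
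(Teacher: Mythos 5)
The paper does not prove this lemma at all: it is quoted explicitly ``without proof'' as a citation of Hardy and Littlewood (1930), so there is no in-paper argument to compare yours against. What you have written is an outline of Karamata's classical proof of the Hardy--Littlewood Tauberian theorem, which is the standard modern route to this statement, and most of your steps are sound: the reduction to exponential test functions, the identity $s^a\tilde f(ks)\to Hk^{-a}=H\int_0^\infty \mathrm{e}^{-kx}\frac{x^{a-1}}{\Gamma(a)}\,dx$, and the final sandwich of $\mathbf{1}_{[0,1]}$ between continuous functions using $f\ge 0$ are all correct.

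There is, however, a genuine gap in your extension step. Stone--Weierstrass does give density of the algebra generated by the $\varphi_k$ in $C_0([0,\infty))$ \emph{in the sup norm}, but sup-norm closeness is not enough to transfer the convergence of the integrals: the measures $\mu_s(A)=s^a\int_{\{t:\,st\in A\}}f(t)\,dt$ need not have finite total mass (nothing in the hypotheses forces $\int_0^\infty f<\infty$), so if $\|\varphi-P\|_\infty<\varepsilon$ the error $s^a\int_0^\infty|\varphi-P|(st)\,f(t)\,dt$ is bounded only by $\varepsilon\, s^a\int_0^\infty f(t)\,dt$, which may be infinite; the limit density $x^{a-1}/\Gamma(a)$ is likewise not integrable at infinity. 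The standard repair is to do the approximation in a weighted form: write $\varphi(x)=\mathrm{e}^{-x}g(\mathrm{e}^{-x})$ with $g$ continuous on $[0,1]$, approximate $g$ uniformly by polynomials $p$ via Weierstrass on the compact interval $[0,1]$, and note that $\mathrm{e}^{-x}p(\mathrm{e}^{-x})$ is a finite linear combination of the $\varphi_k$ with $k\ge 1$. The approximation error is then bounded by $\varepsilon\,\mathrm{e}^{-x}$, and
\begin{equation*}
s^a\int_0^\infty \varepsilon\,\mathrm{e}^{-st}f(t)\,dt=\varepsilon\, s^a\tilde f(s)\longrightarrow \varepsilon H,
\end{equation*}
which is controlled; the corresponding error on the limit side is bounded by $\varepsilon\int_0^\infty \mathrm{e}^{-x}x^{a-1}/\Gamma(a)\,dx=\varepsilon$. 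With the sandwich step then phrased for functions of the form $\mathrm{e}^{-x}g(\mathrm{e}^{-x})$ (choosing $g_\pm$ continuous on $[0,1]$ squeezing $y\mapsto y^{-1}\mathbf{1}_{[\mathrm{e}^{-1},1]}(y)$), your argument closes and yields $s^aF(1/s)\to H/\Gamma(1+a)$ as claimed.
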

In our case, we also have the following lemma
\begin{lemma}
\label{asymptotic5}
Consider $\tilde{\lambda} (s)$ given by equation \eqref{expectedintensity}. Then we have
\begin{equation}
\label{asymptotyc3}
\tilde{\lambda} (s) \sim \frac{\Lambda_0}{1 - \alpha} s^{-1}.
\end{equation}
\end{lemma}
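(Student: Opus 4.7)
The plan is to verify the claimed small-$s$ asymptotics by a direct computation starting from the closed-form expression \eqref{expectedintensity}. Writing
$$
\tilde{\lambda}(s) = \frac{\Lambda_0}{s} \cdot \frac{\gamma + s^\beta}{(1-\alpha)\gamma + s^\beta},
$$
the factor $1/s$ is already present, so it suffices to analyze the rational function $R(s) := (\gamma + s^\beta)/((1-\alpha)\gamma + s^\beta)$ as $s \to 0^+$.

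Since $\beta \in (0,1]$, we have $s^\beta \to 0^+$ as $s \to 0^+$, and because $\gamma > 0$ and $\alpha \in (0,1)$ the denominator $(1-\alpha)\gamma + s^\beta$ stays bounded away from zero in a neighborhood of the origin. Therefore $R(s)$ is continuous at $s=0$, and
$$
\lim_{s \to 0^+} R(s) = \frac{\gamma}{(1-\alpha)\gamma} = \frac{1}{1-\alpha}.
$$
Multiplying back by $\Lambda_0/s$ gives the claimed asymptotic equivalence
$$
\tilde{\lambda}(s) \sim \frac{\Lambda_0}{1-\alpha} \, s^{-1}, \qquad s \to 0^+,
$$
in the sense that the ratio of the two sides tends to one.

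There is no substantive obstacle in this argument: the only thing to check carefully is that the scaling factor $\gamma$ and the branching ratio $\alpha$ produce the correct constant $\Lambda_0/(1-\alpha)$ in front of $s^{-1}$, and that $\beta>0$ is indeed enough to kill the $s^\beta$ terms in numerator and denominator. The only minor point worth mentioning explicitly is that the lemma is stated as an equivalence $\sim$ rather than a limit, so one should phrase the conclusion as $\lim_{s\to 0^+} s\,\tilde{\lambda}(s) = \Lambda_0/(1-\alpha)$, which is exactly the standard meaning of \eqref{asymptotyc3} and which feeds directly into Lemma \ref{tauberian1} (with $a=1$, $H=\Lambda_0/(1-\alpha)$) to yield \eqref{limitingbehaviour}.
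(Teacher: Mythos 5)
Your argument is correct and establishes exactly what the lemma asserts, namely $\lim_{s\to 0^+} s\,\tilde{\lambda}(s)=\Lambda_0/(1-\alpha)$. The paper reaches the same conclusion by a slightly heavier route: it expands $\tilde{\lambda}(s)$ around $s=0$ in a Puiseux--Newton series,
\begin{equation*}
\tilde{\lambda}(s)=\frac{\Lambda_0}{1-\alpha}\,s^{-1}-\sum_{k=1}^{\infty}\frac{\alpha\Lambda_0}{(\alpha-1)^{k+1}\gamma^{k}}\,s^{k\beta-1},
\end{equation*}
and reads off that the ratio of $\tilde{\lambda}(s)$ to $\frac{\Lambda_0}{1-\alpha}s^{-1}$ tends to $1$. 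Your version replaces the series by the elementary observation that the factor $(\gamma+s^\beta)/((1-\alpha)\gamma+s^\beta)$ is continuous at $s=0$ with value $1/(1-\alpha)$, which is all the lemma needs and is arguably cleaner; what the paper's expansion buys in exchange is the explicit correction terms of order $s^{k\beta-1}$, which quantify how fast the leading behaviour is approached (and hence, via the Tauberian argument, the subleading behaviour of $\mathbb{E}[N(t)]$), information that a bare limit computation does not provide. Your closing remark correctly identifies the constants $H=\Lambda_0/(1-\alpha)$ and $a=1$ that feed into Lemma~\ref{tauberian1}.
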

\begin{proof}
For $s \to 0$, equation \eqref{expectedintensity} can be expanded around $s =0$ in Puiseux-Newton series \citep{puiseux} as
\begin{equation}
\label{puiseux}
\tilde{\lambda} (s) = \frac{\Lambda_0}{1-\alpha} s^{-1} - \sum^{\infty}_{k=1}\frac{\alpha\Lambda_0}{(\alpha-1)^{k+1}\gamma^{k}}s^{k\beta-1}
\end{equation}
meaning that
\begin{equation}
\lim_{s \to 0} \frac{\tilde{\lambda}(s)}{\frac{\Lambda_0}{1-\alpha}s^{-1}} = 1.
\end{equation}
\end{proof}
An immediate consequence of the two previous lemmata is the following proposition
\begin{proposition}
\label{asymptotic4}
Let $\lambda(t) = \mathcal{L}^{-1} \{ \tilde{\lambda} (s) \} (t)$ with $ \tilde{\lambda} (s)$ given by equation \eqref{expectedintensity}. Then $$\lim_{t \to \infty} \lambda(t) = \frac{\Lambda_0}{1-\alpha}.$$
\end{proposition}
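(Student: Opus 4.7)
The plan is to string together the two preceding lemmas in the obvious way: apply the Hardy--Littlewood Tauberian theorem (Lemma \ref{tauberian1}) to $f=\lambda$, with the small-$s$ equivalent supplied by Lemma \ref{asymptotic5}. First I would verify the hypotheses of Lemma \ref{tauberian1}: the expected intensity $\lambda(t)=\mathbb{E}[\Lambda(t\mid \mathcal{H}_t)]$ is nonnegative and, by the self-consistent equation \eqref{selfconsistent}, continuous on every compact $[0,T]$, hence integrable there; moreover $\mathrm{e}^{-st}\lambda(t)$ is integrable on $(0,\infty)$ for every $s>0$ since $\tilde{\lambda}(s)$ is an explicit rational function of $s^\beta$ (equation \eqref{expectedintensity}) that is analytic for $\mathrm{Re}(s)>0$.

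Next I would read off the constants in \eqref{asymptotic1}: Lemma \ref{asymptotic5} gives $H=\Lambda_0/(1-\alpha)$ and $a=1$. Plugging these into the conclusion \eqref{asymptotic2} of Lemma \ref{tauberian1}, and noting that $\Gamma(2)=1$, yields
\[
\int_0^t \lambda(u)\, du \;\sim\; \frac{\Lambda_0}{1-\alpha}\, t \qquad (t\to\infty),
\]
which is Ces\`aro convergence of $\lambda(t)$ to $\Lambda_0/(1-\alpha)$.

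The main obstacle is the last step: Ces\`aro convergence does not in general imply pointwise convergence, so some additional one-sided Tauberian control on $\lambda$ is needed. The cleanest bridge is monotonicity. From Proposition \ref{propositionnew} we have
\[
\lambda(t) \;=\; \frac{\Lambda_0}{1-\alpha}\Bigl[\,1 - \alpha\, E_\beta\bigl((\alpha-1)\gamma t^\beta\bigr)\Bigr],
\]
and since $E_\beta(-x)$ is completely monotone in $x\ge 0$ for $\beta\in(0,1]$ (a classical result of Pollard), the map $t\mapsto E_\beta((\alpha-1)\gamma t^\beta)$ is nonincreasing in $t$, so $\lambda$ is nondecreasing. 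A nondecreasing function whose Ces\`aro average converges to $L$ necessarily converges to $L$ pointwise, giving the claimed limit $\Lambda_0/(1-\alpha)$. (Alternatively, one could bypass the Tauberian route altogether and read the limit directly from the same explicit formula, using $E_\beta(-x)\to 0$ as $x\to\infty$; but since the preceding lemmas are tailored to the Tauberian argument, I would present it in the form above.)
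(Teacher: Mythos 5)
Your proposal follows the same overall route as the paper --- combine Lemma \ref{tauberian1} with Lemma \ref{asymptotic5}, with $H=\Lambda_0/(1-\alpha)$ and $a=1$, to get $\int_0^t\lambda(u)\,du\sim \Lambda_0 t/(1-\alpha)$ --- but you handle the final step more carefully than the paper does. The paper simply ``takes the derivative'' of the asymptotic relation $\mathbb{E}[N(t)]\sim \Lambda_0 t/(1-\alpha)$ to conclude $\lambda(t)\sim\Lambda_0/(1-\alpha)$, which, as you correctly observe, is not a valid operation in general: an asymptotic equivalence of integrals (Ces\`aro convergence) does not imply pointwise convergence of the integrand without an additional Tauberian condition. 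Your bridge --- monotonicity of $\lambda$, obtained from the explicit formula \eqref{analyticalinverse} of Proposition \ref{propositionnew} together with Pollard's complete monotonicity of $E_\beta(-x)$ for $\beta\in(0,1]$, plus the elementary fact that a monotone function converges to the same limit as its Ces\`aro mean --- closes exactly the gap the paper leaves open. Your parenthetical alternative is also worth noting: once Proposition \ref{propositionnew} is available, the limit follows immediately from $E_\beta(-x)\to 0$ as $x\to\infty$, with no Tauberian machinery at all; this is arguably the cleanest argument, though it does not explain why the limit was originally announced in \cite{chen20} as a Tauberian consequence. In short, your proof is correct and is in fact more complete than the one in the paper.
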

\begin{proof}
With the identification $H = \Lambda_0/(1 - \alpha)$ and
$a = 1$, we have that the expected number of events asymptotically grows in a linear way by virtue of Lemma 1 and Lemma 2 for large $t$
\begin{equation}
\label{expnasympt}
\mathbb{E} [N(t)] = \int_0^t \lambda (u) \,du \sim \frac{\Lambda_0}{1-\alpha} t.
\end{equation}
Taking the derivative of equation \eqref{expnasympt} leads to the thesis
\begin{equation}
\label{lambdasympt}
\lambda (t) \sim \frac{\Lambda_0}{1-\alpha}.
\end{equation}
\end{proof}

\section{Expected number of events}
\label{sec:ENE}
As we mentioned previously, the expected number of events, $\mathbb{E}[N(t)]$ up to time $t$ is given by the integral of the expected intensity from the initial time $0$ to $t$. From Proposition 2, we can see that for large $t$, it grows linearly in $t$ with slope given by $\Lambda_0/(1-\alpha)$. In the case $\beta = 1/2$, we can directly integrate equation \eqref{inv1} and compare the result with Monte Carlo simulations of the process using the program described in \citep{chen20} and available at {\tt https://github.com/habyarimanacassien/Fractional-Hawkes}. This gives the following exact analytical result
\begin{multline}
\label{expn1}
\mathbb{E}[N(t)] = \int_0^t \lambda(u) \, du = \frac{\Lambda_0}{1-\alpha}t-\frac{\alpha\Lambda_0}{1-\alpha}\int^t_0e^{(1-\alpha)^2\gamma^2u}\erfc{\left((1-\alpha)\gamma\sqrt{u}\right)}\,du \\
=\frac{\Lambda_0}{1-\alpha}t-\frac{\alpha\Lambda_0}{(1-\alpha)^3\gamma^2}\int^{(1-\alpha)^2\gamma^2t}_0e^{w}\erfc{(\sqrt{w})}\,dw
\end{multline}
where $w = (1-\alpha)^2 \gamma^2 u$. The integral in the right hand side of equation (\ref{expn1}) can be computed and gives the following result
\begin{multline}
\mathbb{E}[N(t)]=\frac{\Lambda_0}{1-\alpha}t \\
-\frac{\alpha\Lambda_0}{(1-\alpha)^3\gamma^2\sqrt{\pi}}\left(\sqrt{\pi}e^{(1-\alpha)^2\gamma^2t}\erfc{((1-\alpha)\gamma\sqrt{t})}+2(1-\alpha)\gamma\sqrt{t}-\sqrt{\pi}\right). \label{expn2}
\end{multline}
\begin{remark}
From equation \eqref{expn2}, one can see that
$$
\lim_{t \to \infty} \frac{\mathbb{E}[N(t)]}{t} = \frac{\Lambda_0}{1 - \alpha},
$$
or, in other words, $\mathbb{E}[N(t)] \sim \Lambda_0 t /(1 - \alpha)$ in agreement with equation \eqref{expnasympt} in Proposition 2.
\end{remark}
The comparison between the analytical result \eqref{expn2} and Monte Carlo simulations is presented in Figure \ref{Fig:2} for values of the parameters given in the caption.
\begin{figure}
\includegraphics[width= \textwidth]{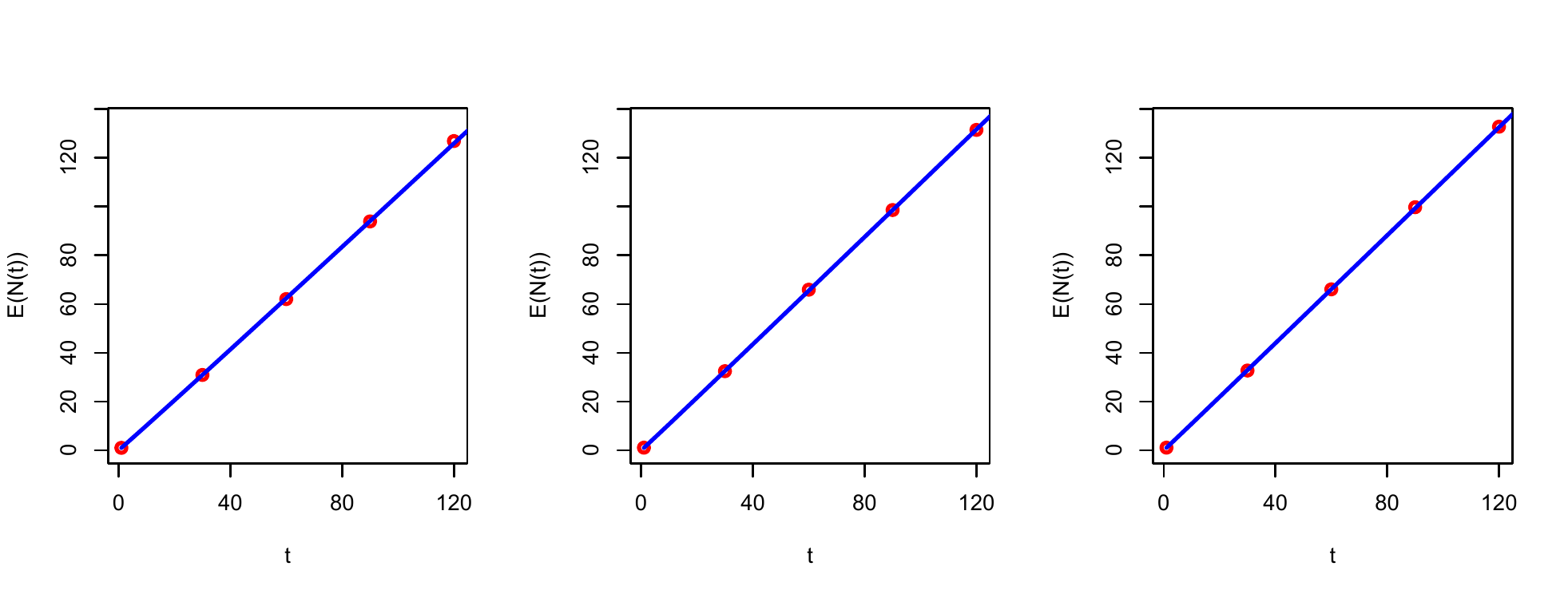}
\caption{(Colour online) Comparison between the exact formula \eqref{expn1} for $\beta = 1/2$ (solid blue line) and Monte Carlo simulations (red circles). The parameters are $\Lambda_0 =1$, $\alpha=0.1$ and $\gamma = 0.1,0.8,1.7$ from left to right. Time is measured in arbitrary units.}
\label{Fig:2}
\end{figure}

In the general case $\beta \in (0,1)$ and $\beta \neq 1/2$, one can first numerically invert the Laplace transform using the procedure outlined in Section 2.2 and then numerically integrate the inverse to obtain an estimate of 
the expected number of events up to time $t$. The program to do so is available at the previously mentioned repository: {\tt https://github.com/habyarimanacassien/Fractional-Hawkes}. Results for the case $\beta =0.99$ are presented in Figure \ref{Fig:3} for values of the other parameters given in the caption. This curve is compared with Monte Carlo simulations of the process. \textcolor{black}{It is also possible to integrate \eqref{analyticalinverse} with respect to $t$ to get
\begin{equation}
\mathbb{E}[N(t)] = \frac{\Lambda_0}{1-\alpha} t - \frac{\alpha \Lambda_0}{1-\alpha} t E_{\beta,2}^1 ((\alpha-1) \gamma t^\beta),
\label{exactexpectation}
\end{equation} 
where
$$
t E_{\beta,2}^1 ((\alpha -1) \gamma t^\beta) = \int_0^t E_\beta ((\alpha -1) \gamma u^\beta) \, du.
$$
This integral is an immediate consequence of equation (2.3.17) in \cite{mathai08} 
$$
\int_0^z t^{c-1} E_{b,c} (w t^b) \, dt = z^c E_{b,c+1}^1 (w z^\beta),
$$
when we set $b = \beta$, $c=1$ and $w = (\alpha-1) \gamma$. The same identity can be used to see that equation \eqref{exactexpectation} reduces to \eqref{expn1} (or equivalently \eqref{expn2}) for $\beta = 1/2$.}
\begin{figure}
\includegraphics[width= \textwidth]{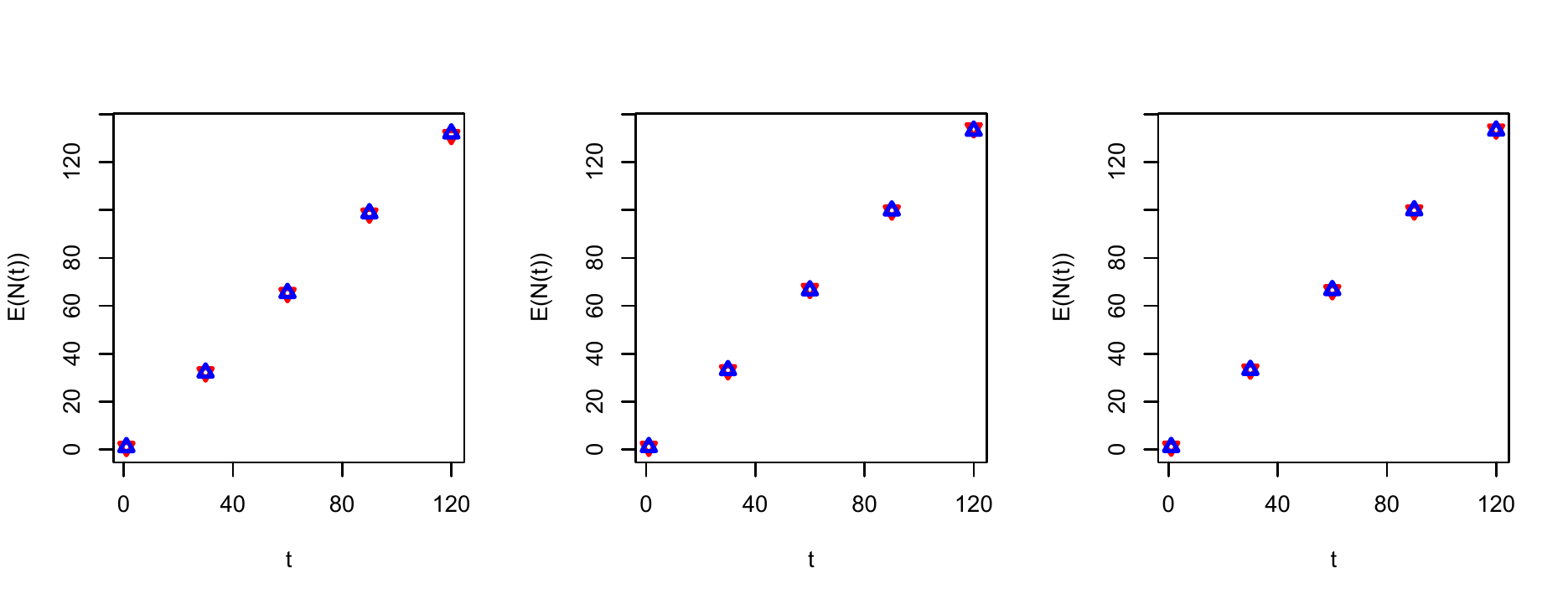}
\caption{(Colour online) Comparison between the integral of the numerical inversion of the Laplace transform (blue triangles) and Monte Carlo simulations (red triangles) for $\beta = 0.99$. The parameters are $\Lambda_0 =1$, $\alpha=0.1$ and $\gamma = 0.1,0.8,1.7$ from left to right. Time is measured in arbitrary units.}
\label{Fig:3}
\end{figure}

\section{Distribution of the number of events}
\label{sec:dist}

In the previous section, the expected number of events up to time $t$ is discussed. In principle, it is possible to derive the distribution of the random variable $N(t)$ at any fixed time using the Monte Carlo simulation of the process. The program for this task is available at the GitHub repository {\tt https://github.com/habyarimanacassien/Fractional-Hawkes}. Unfortunately, for the distribution of this random variable, we do not have any exact formula, but it is possible to compare it with the Poisson distribution in the limit of ``small'' branching ratio $\alpha$. This is done in Figure \ref{Fig:4}, for $\Lambda_0=1$ and $\gamma=1$, when $\alpha=0.01$, $t=1, 5, 10$ from top to bottom and $\beta=0.5, 0.9$ from left to right. We can see that, with this branching ratio, the behaviour of the counting process is similar to the behaviour of the Poisson process with the same values of $\Lambda_0$ and $\gamma$ as expected.
\begin{figure}
\includegraphics[width= \textwidth]{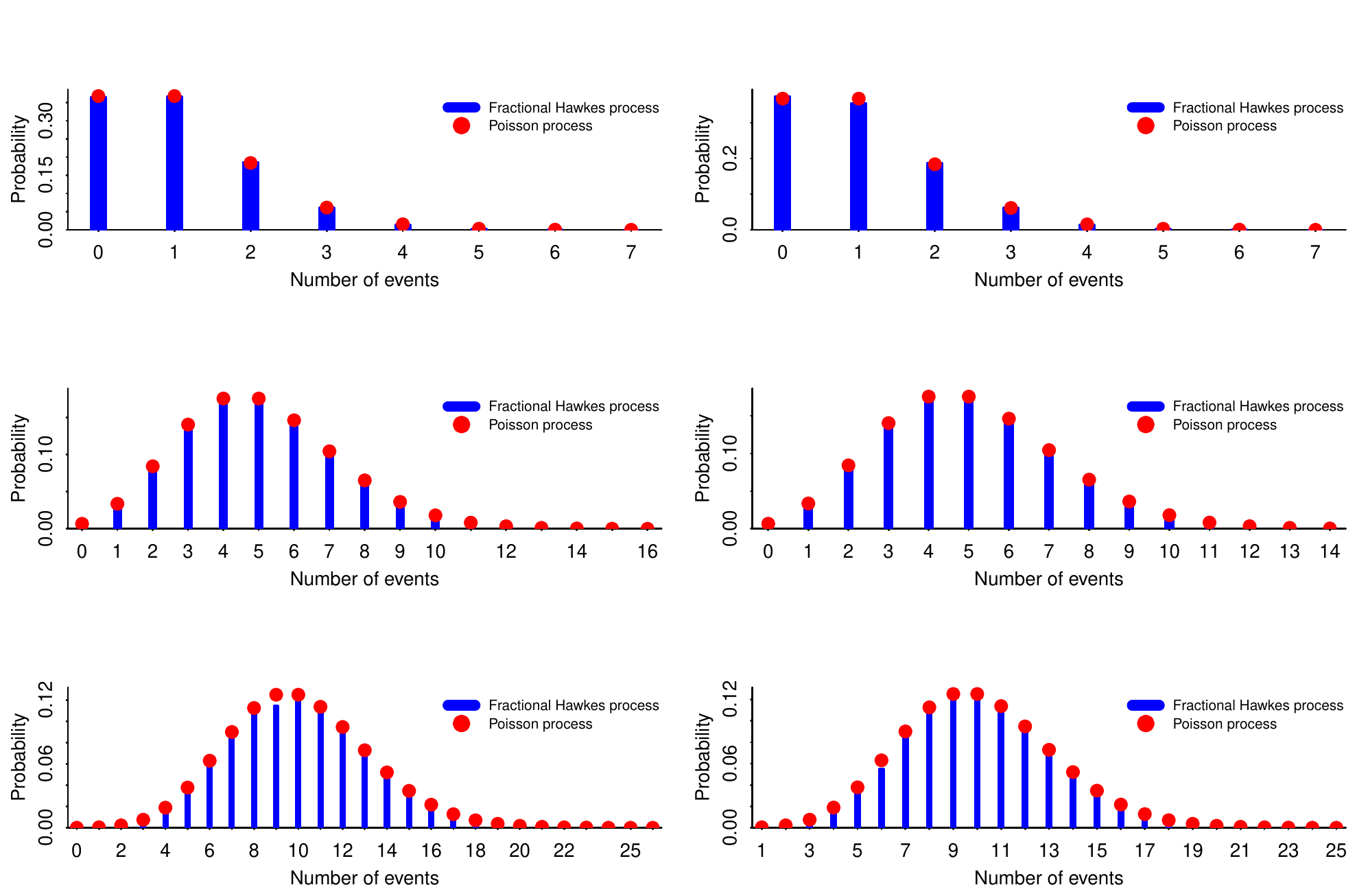}
\caption{(Colour online) Comparison between the distribution $\mathbb{P}(N(t) = k)$ for the fractional Hawkes process (vertical blue bars) and for the corresponding limiting Poisson process (red dots). The parameters are $\Lambda_0=1$, $\gamma=1$, $\alpha=0.01$, $t=1, 5, 10$ from top to bottom and $\beta=0.5, 0.9$ from left to right. Time is measured in arbitrary units.}
\label{Fig:4}
\end{figure}
There is another important limit that can be used as a test for the Monte Carlo program. When $\beta = 1$, the Mittag-Leffler kernel coincides with the exponential kernel and one can expect that, for $\beta$ close to $1$, the behaviour of the fractional Hawkes process is close to the one of the standard Hawkes process with an exponential kernel. This limit is illustrated in Figure \ref{Fig:5} when $\beta=0.99$, $t=1, 5, 10$ from top to bottom and $\alpha=0.1, 0.5$ from left to right. Also in this case, we set $\Lambda_0=1$ and $\gamma=1$. Also in this limit, one can see that the two processes behave in a similar way.
\begin{figure}
\includegraphics[width= \textwidth]{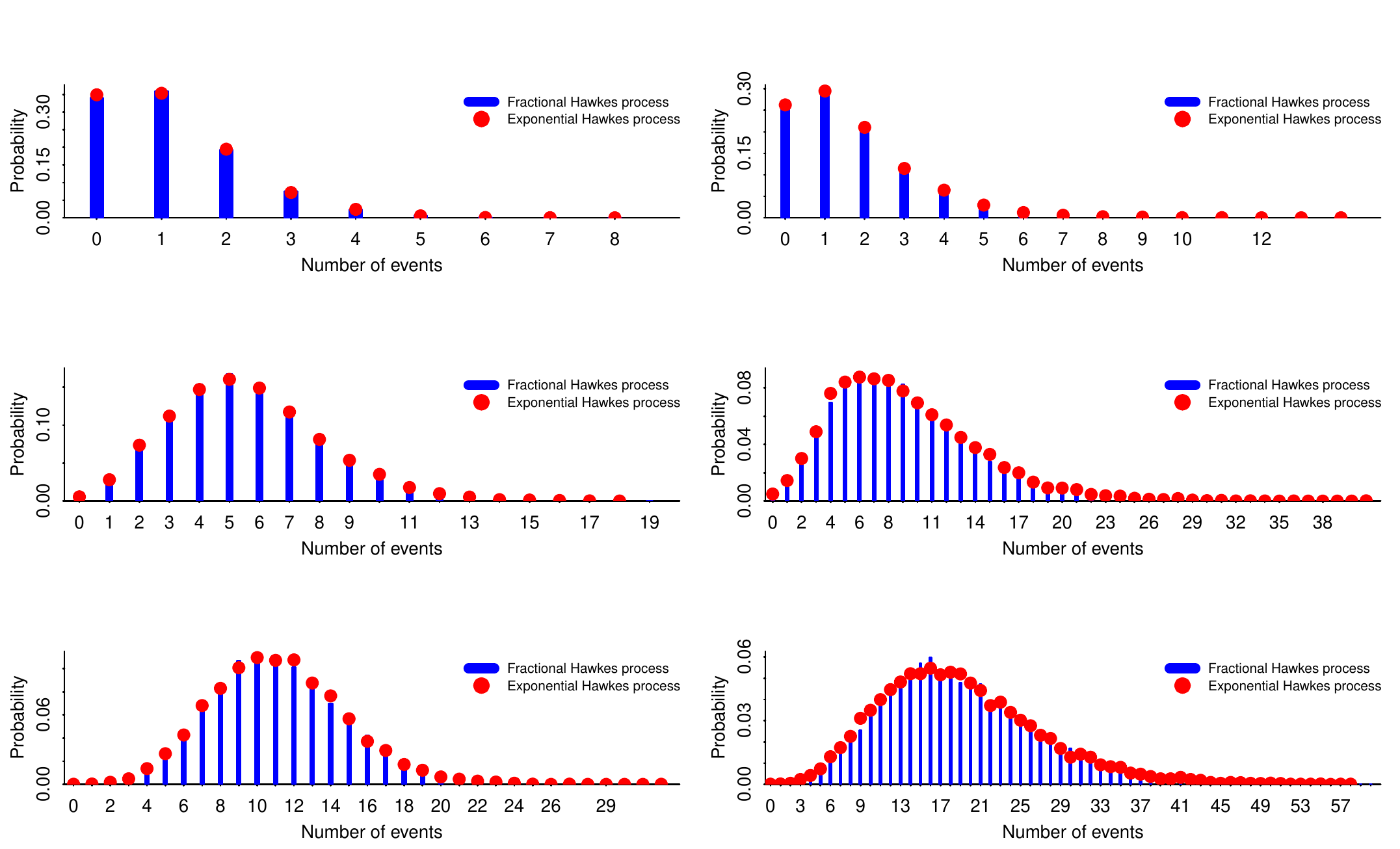}
\caption{(Colour online) Comparison between the distribution $\mathbb{P}(N(t) = k)$ for the fractional Hawkes process (vertical blue bars) and for limiting Hawkes process with exponential kernel (red dots). The parameters are $\Lambda_0=1$, $\gamma=1$, $\beta=0.99$, $t=1, 5, 10$ from top to bottom and $\alpha=0.1, 0.5$ from left to right. Time is measured in arbitrary units.}
\label{Fig:5}
\end{figure}
To conclude this section, we consider the case in which $\alpha = 0.5$ and we set $\beta = 0.5, 0.9$ as in Figure \ref{Fig:4}. In this case, represented in Figure \ref{Fig:6}, one can see that the Poisson approximation does no longer work as the effect of the self-exciting part is now stronger.
\begin{figure}
\includegraphics[width= \textwidth]{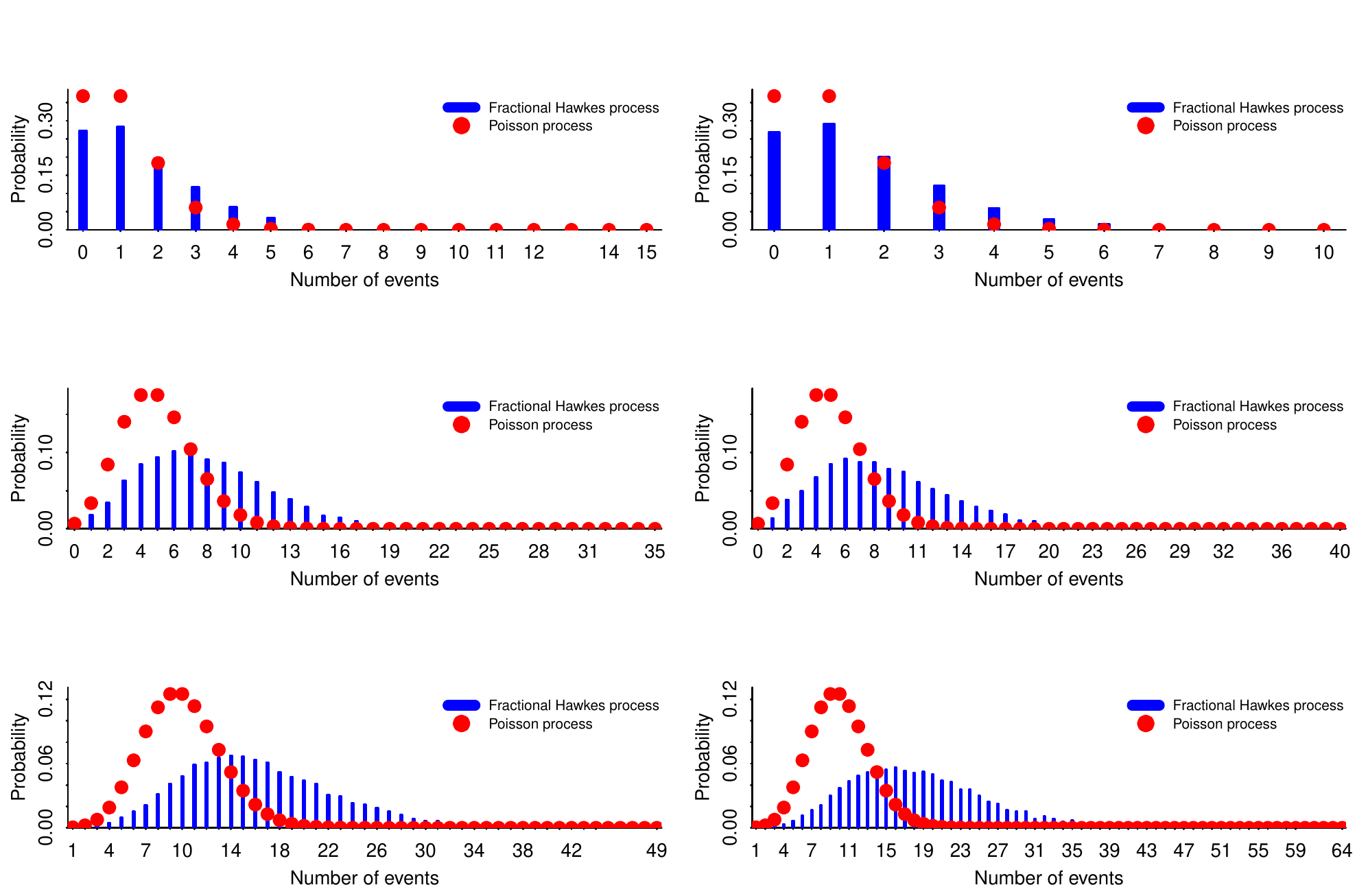}
\caption{(Colour online) Comparison between the distribution $\mathbb{P}(N(t) = k)$ for the fractional Hawkes process (vertical blue bars) and for the corresponding limiting Poisson process (red dots). The parameters are $\Lambda_0=1$, $\gamma=1$, $\alpha=0.5$, $t=1, 5, 10$ from top to bottom and $\beta=0.5, 0.9$ from left to right. Time is measured in arbitrary units.}
\label{Fig:6}
\end{figure}

\section{Conclusions and Outlook} 

In \cite{chen20}, we introduced a Hawkes process of fractional type by using a kernel proportional to the probability density function of Mittag-Leffler random variables. In this paper, we characterize the process by including explicit proofs of some results that were announced in \cite{chen20} and we further present simulations to illustrate and corroborate our analytical results. 

Our next step will be to study the efficiency and performance of procedures estimating parameters of the process. In particular, we are currently working on approximate Bayesian computation (ABC) algorithms \cite{abc} and we plan this to be the subject of a forthcoming paper. We also plan work on multivariate Hawkes processes which will extend the applicability of the method.

\section*{Acknowledgments}

\noindent Enrico Scalas would like to thank the Isaac Newton Institute for Mathematical Sciences, Cambridge, for support and hospitality during the programme Fractional Differential Equations (FDE2) where work on this paper was undertaken. This work was supported by EPSRC grant no EP/R014604/1. Enrico Scalas was also partially funded by the Dr Perry James (Jim) Browne Research Center at the Department of Mathematics, University of Sussex. \textcolor{black}{Federico Polito was partially supported by INdAM/GNAMPA. Finally, insightful discussions with Donatien Hainaut and Nicos Georgiou are acknowledged.}

\end{document}